\documentclass[]{amsart}

\usepackage{amssymb, mathrsfs, color, amsthm}

\newtheorem{thm}{Theorem}[section]
\newtheorem{cor}[thm]{Corollary}
\newtheorem{lem}[thm]{Lemma}

\newtheorem{prop}[thm]{Proposition}

\numberwithin{equation}{section}

\renewcommand{\phi}{\varphi}

\newcommand{\D}{\mathbb{D}}

\newcommand{\eps}{\varepsilon}

\newcommand{\B}{\mathcal{B}}

\newcommand{\re}{\mbox{\rm Re\,}}

\newcommand{\diam}{\mbox{\rm diam\,}}

\begin{document}

\title[Composition Semigroups on BMOA and $H^{\infty}$]
{Composition Semigroups on BMOA and $H^{\infty}$}
\date{\today}

\author[A. Anderson]{Austin Anderson}
\address{Department of Mathematics, University of Hawaii, Honolulu, Hawaii 96822}
\email{austina@hawaii.edu}
\author[M. Jovovic]{Mirjana Jovovic}
\address{Department of Mathematics, University of Hawaii, Honolulu, Hawaii 96822}
\email{jovovic@math.hawaii.edu}
\author[W. Smith]{Wayne Smith}
\address{Department of Mathematics, University of Hawaii, Honolulu, Hawaii 96822}
\email{wayne@math.hawaii.edu}

\thanks {}
\subjclass [2000] {Primary: 47B33; Secondary: 47D05, 30A76}
\keywords{Composition semigroups, BMOA, $H^\infty$.
 }

\begin{abstract}
 We study $[\phi_t , X]$,  the maximal space of strong continuity for a semigroup of
 composition operators induced by a semigroup $\{\phi_t\}_{t\ge0}$ of analytic self-maps of the unit disk,
  when $X$ is BMOA, $H^\infty$ or the disk algebra. In particular, we show that $[\phi_t,\text{BMOA}] \neq \text{BMOA}$ for all nontrivial semigroups.   We also prove, for every semigroup $\{\phi_t\}_{t\ge0}$,  that
$\lim_{t \to 0^+} \phi_t(z) = z$ not just pointwise, but in $H^{\infty}$ norm.
This provides a unified proof of known results about $[\phi_t , X]$ when $X \in \{H^p, A^p, \mathcal B_0, \text{VMOA}\}$.

\end{abstract}
\maketitle

\section{Introduction}

Let $\D$ denote the unit disk $\{z : |z| < 1\}$ and $H(\D)$ the set of analytic functions on $\D$.
A one-parameter semigroup $\{\phi_t\}_{t\ge0}$ of analytic functions on  $\D$ is a family  of
analytic functions $\phi_t : \D \to \D$ that satisfies the following three conditions:
\begin{enumerate}
\item [(SG1)] $\phi_0$ is the identity, i.e. $\phi_0(z)=z$, $z\in \D$;
\item [(SG2)] $\phi_{s+t}=\phi_s\circ\phi_t$, for all $t,s\ge 0$;
\item [(SG3)] the mapping $(t,z)\to\phi_t(z)$ is continuous on $[0,\infty)\times \D$.
\end{enumerate}

In the trivial case, $\phi_t(z)=z$ for all $t\ge0$.  Otherwise, we say that $\{\phi_t\}$ is nontrivial.
For $\{\phi_t\}$, consider the  set of linear operators $\{ C_t \}$, where $C_t(f) = f \circ \phi_t$ for $f \in H(\D)$.  Then $C_0$ is the identity operator and $C_{s+t}=C_s C_t$, and so
$\{ C_t \}$ is called a composition semigroup.
If $C_t$ is a bounded operator on some Banach space $X\subset H(\D)$ for all $t \geq 0$, we say that
the  semigroup $\{\phi_t\}$ acts on $X$.
If in addition
the strong continuity condition
  $$\lim_{t \to 0^+} \| f\circ \phi_t - f \|_X = 0$$
holds for all $f \in X$, then it is said that $\{\phi_t\}$ \textit{generates} $\{ C_t \}$, and $\{C_t\}$ is strongly continuous on $X$.

The study of composition semigroups on spaces of analytic functions began in \cite{BP}, where Berkson
and Porta investigated the basic structure of semigroups $\{\phi_t\}$ and properties of $\{C_t\}$ on
the Hardy spaces of the disk.  A survey of further developments through 1998 of
semigroups of composition operators
acting on classical Banach spaces can be found in \cite{Sis}.
More recent papers about composition semigroups  include  \cite{Bla}, \cite{Bla2} and  \cite{CD}.
Banach spaces considered include the Hardy spaces $H^p$, the Bergman spaces $A^p$,
the disk algebra $A$, the Dirichlet space $\mathcal D$, the Bloch space $\mathcal B$, BMOA, and
their ``little-oh" subspaces $\mathcal B_0$ and VMOA.

For a semigroup $\{\phi_t\}$ that acts on a Banach space $X \subset H(\D)$,
denote by $[\phi_t , X]$ the maximal closed subspace of $X$ on which $\{ C_t \}$ is strongly continuous.
Thus $[\phi_t , X]=X$ means that $\{\phi_t\}$  generates $\{ C_t \}$, and $\{C_t\}$ is strongly continuous on $X$.
Here is a quick summary of what is known:
\begin{enumerate}
\item [(i)] If $X\in\{H^p\,\,(1\le p<\infty), A^p\,\,(1\le p<\infty), \mathcal D, \mathcal B_0, \text{VMOA}\}$
and $\{\phi_t\}$ is any  semigroup, then $[\phi_t , X]=X$;
\item [(ii)] For every nontrivial  semigroup $\{\phi_t\}$,
$[\phi_t ,H^\infty]\subsetneqq H^\infty$ and
$[\phi_t ,\mathcal B]\subsetneqq \mathcal B$;
\item [(iii)] For every  semigroup $\{\phi_t\}$,
$\text{VMOA} \subseteqq [\phi_t ,\text{BMOA}]$ and
$\mathcal B_0\subseteqq[\phi_t ,\mathcal B]$.
\end{enumerate}

Statement (ii) can be proved using a
functional analytic argument based on
$H^{\infty}$ and the Bloch space having the Dunford-Pettis property; see \cite{Bla2} and \cite{Lotz}.
The space BMOA does not have the Dunford-Pettis property, so this proof is not available and the
corresponding statement had remained open.  Our first result resolves this problem by
extending statement (ii) to include BMOA.
The proof is function theoretic, and also provides a new proof of the result for $H^\infty$ and the Bloch space;
see Section \ref{bmoasec}.

\begin{thm} \label{bmoathm}
Suppose $H^\infty \subseteq X \subseteq \mathcal B$.  Then $[\phi_t,X] \subsetneqq X$.  In particular, $[\phi_t, \text{BMOA}] \subsetneqq \text{BMOA}$.
\end{thm}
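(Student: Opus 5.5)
The plan is to produce a single $f\in H^\infty$ for which $\|f\circ\phi_t-f\|_{\mathcal B}\not\to0$ as $t\to0^+$, for a given nontrivial semigroup $\{\phi_t\}$. This suffices for every $X$ with $H^\infty\subseteq X\subseteq\mathcal B$. Indeed, by the closed graph theorem the inclusion $X\subseteq\mathcal B$ is continuous, so $\|g\|_{\mathcal B}\le C\|g\|_X$. Hence such an $f\in H^\infty\subseteq X$ satisfies $f\notin[\phi_t,X]$.

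The tool turning a Bloch-norm estimate into a pointwise one is the inequality
$$|g(z)-g(w)|\le \|g\|_{\mathcal B}\,\beta(z,w),$$
where $\beta$ is the hyperbolic metric. I will build sequences $z_n\to\partial\D$ and $t_n\to0^+$ with the following properties:
\begin{enumerate}
\item[(a)] the points $z_n$, $w_n:=\phi_{t_n}(z_n)$ and $u_n:=\phi_{2t_n}(z_n)=\phi_{t_n}(w_n)$ satisfy $\beta(z_n,w_n)\le \delta$ and $\beta(w_n,u_n)\le\delta$, for a fixed $\delta>0$;
\item[(b)] the union $\{z_n\}\cup\{w_n\}\cup\{u_n\}$ is uniformly separated, i.e.\ an interpolating sequence for $H^\infty$.
\end{enumerate}
By Carleson's interpolation theorem there is then $f\in H^\infty$ with $f(z_n)=0$, $f(w_n)=1$, $f(u_n)=0$. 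For $g_n=f\circ\phi_{t_n}-f$ we get $g_n(z_n)=1$ and $g_n(w_n)=-1$. Therefore
$$2\le\|g_n\|_{\mathcal B}\,\beta(z_n,w_n)\le \delta\,\|g_n\|_{\mathcal B},$$
so $\|g_n\|_{\mathcal B}\ge 2/\delta$ for all $n$, which proves the theorem.

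To construct the points I will use the Berkson--Porta description of the generator, $G(z)=\lim_{t\to0^+}(\phi_t(z)-z)/t$. It has the form $G(z)=(\bar b z-1)(z-b)P(z)$ with $\re P\ge0$, and $G\not\equiv0$ since the semigroup is nontrivial. Such a $G$ lies in $H^p$ for small $p$, so it has nonzero nontangential limits a.e.\ on $\partial\D$. Consequently the hyperbolic speed $|G(z)|/(1-|z|^2)$ of the trajectories $t\mapsto\phi_t(z)$ tends to $\infty$ along radii ending at such boundary points.

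Fix a small $\delta>0$. Along such a radius, $\beta(z,\phi_t(z))$ is continuous in $t$, vanishes at $t=0$, and exceeds $\delta$ for $t$ of order $(1-|z|)/|G(z)|$. So the first time $t(z)$ at which it equals $\delta/2$ tends to $0$ as $|z|\to1$. I take $t_n=t(z_n)$, with the $z_n$ chosen on the radius tending to $\partial\D$ very rapidly. Hyperbolic separation of the blocks for different $n$ then follows from the rapid convergence. Within a block, $\beta(z_n,w_n)=\delta/2$ by construction, and $\beta(w_n,u_n)\le\delta/2$ by Schwarz--Pick.

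The main obstacle is the remaining lower bound $\beta(w_n,u_n)\ge c>0$, together with $\beta(z_n,u_n)\ge c$. These are needed so that the triple is separated and Carleson's condition holds; the Carleson measure condition itself is then automatic for finitely many separated points per block on a sparse radial sequence. For this I will use that $G$ and its speed vary little on a hyperbolic disk of radius $\delta$ about $z_n$, once $\delta$ is small and $z_n$ is near a good boundary point. A standard distortion estimate for $G(z)(1-|z|^2)^{-1}$ on hyperbolic disks should do this, via Schwarz--Pick applied to $P$, which maps $\D$ into the right half-plane. Then the flow over time $t_n$ is approximately a hyperbolic translation by $\delta/2$, so $u_n$ lies at distance about $\delta$ from $z_n$ and about $\delta/2$ from $w_n$. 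If that distortion argument proves delicate, my fallback is to conjugate by the Koenigs function $h$ (with $h\circ\phi_t=h+t$ or $e^{-ct}h$) and do the geometry in the model domain.
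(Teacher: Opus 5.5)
Your argument is correct, and it takes a genuinely different route from the paper.

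The paper argues globally through the Koenigs model $h$. It splits into four cases: rotations; $b=0$ non-automorphic; $b=1$ automorphic; $b=1$ non-automorphic. It uses prime ends and radial limits at accessible prime ends to find a boundary point $\gamma_0$ whose radius is carried by each $\phi_t$, $t>0$, to a curve ending inside $\D$ or at a different boundary point. It then takes a single interpolating Blaschke product with zeros on that radius, so that $|f'(r\gamma_0)|(1-r)\ge\delta$ is not matched by $(f\circ\phi_t)'$ there.

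You instead argue locally and uniformly from the generator. The step you flag as the obstacle does go through:
\begin{itemize}
\item If $\beta(z,w)\le\delta$, Schwarz--Pick for $P$ into the right half-plane gives $|P(w)-P(z)|\lesssim\delta\,\re P(z)\le\delta|P(z)|$.
\item The factor $(\bar b w-1)(w-b)$ changes only by a factor $1+O(\delta)$. This holds also for $|b|=1$, since $|w-z|\lesssim\delta(1-|z|)\le\delta|z-b|$.
\item Hence $G(w)=G(z)(1+O(\delta))$ on the hyperbolic $\delta$-disk about $z$.
\item The trajectory stays in the hyperbolic $\delta$-disk about $z_n$ up to time $2t_n$, by the triangle inequality plus Schwarz--Pick.
\item Integrating $\partial_s\phi_s=G\circ\phi_s$ then shows that $w_n-z_n$ and $u_n-w_n$ both equal $t_nG(z_n)$ up to an error $O(\delta\, t_n|G(z_n)|)$. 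This yields the within-block separation for small $\delta$.
\end{itemize}
With nontangential limits you even get $G\to G^*(\zeta)$ uniformly on these disks, which is simpler still.

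Your approach gives a case-free proof. It uses only the Berkson--Porta form of $G$, Schwarz--Pick and Carleson's interpolation theorem, with no prime-end theory or Koenigs function. What it gives up is that the lower bound is obtained only along a sequence $t_n\to0^+$. That is enough for the theorem, but weaker than the $\liminf$ in Theorem \ref{bmoa}. Also, your test function is an abstract interpolating function rather than an explicit Blaschke product.
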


From (SG1) and (SG3) we have the pointwise convergence $\phi_t(z) \to z$ as $t \to 0^+$, which can easily be
extended to uniform convergence on compact subsets of $\D$.
It was recently observed by P. Gumenyuk that  this extends to uniform convergence \textit{on
all of $\D$}
for every  semigroup $\{\phi_t\}$.

\begin{thm} \label{uniformthm}  \cite[Proposition 3.2]{Gum}
For every   semigroup $\{ \phi_t \}$,
$$
\lim_{t \to 0^+} \|\phi_t(z) - z\|_{H^\infty} = 0.
$$
\end{thm}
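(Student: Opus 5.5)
The plan is to argue by contradiction, using the linearization (Koenigs) model of the semigroup to turn the motion of points under $\phi_t$ into a translation or a rotation-dilation of a planar domain. The statement is conformally invariant: conjugating $\{\phi_t\}$ by a disk automorphism changes $\|\phi_t(z)-z\|_{H^\infty}$ by at most a bounded factor, since automorphisms are uniformly Lipschitz on $\overline{\D}$. So we may normalize the Denjoy--Wolff point. There are two cases.

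\emph{Interior case.} Here $\phi_t(0)=0$, and there is a univalent $h$ with $h(0)=0$ and $h\circ\phi_t=e^{-ct}h$, where $\re c\ge 0$. The image $\Omega=h(\D)$ is then spirallike: $e^{-ct}\Omega\subseteq\Omega$ for $t\ge0$.

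\emph{Boundary case.} There is a univalent $h$ with $h\circ\phi_t=h+t$. The image $\Omega$ satisfies $\Omega+[0,\infty)\subseteq\Omega$.

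In both cases $\phi_t(z)=h^{-1}\bigl(\Lambda_t(h(z))\bigr)$, where $\Lambda_t$ is the linear flow of the model ($w\mapsto e^{-ct}w$ or $w\mapsto w+t$). Also, the trajectory $s\mapsto\phi_s(z)$ is the $h$-preimage of the arc $\{\Lambda_s(h(z)):0\le s\le t\}$, and this arc lies in $\Omega$ by the invariance above.

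Suppose the conclusion fails. Then there are $\eps>0$, $t_n\to0^+$ and $z_n\in\D$ with $|\phi_{t_n}(z_n)-z_n|\ge\eps$. By (SG3), uniform convergence on compacta already holds, so $|z_n|\to1$. By continuity in $s$ we may choose $s_n\le t_n$ with $|\phi_{s_n}(z_n)-z_n|=\eps$ exactly, and with $|\phi_s(z_n)-z_n|\le\eps$ for $s\le s_n$. Thus the curve $\gamma_n=\{\phi_s(z_n):0\le s\le s_n\}$ has Euclidean diameter $\ge\eps$, stays in the closed disk of radius $\eps$ about $z_n$, and tends to the unit circle. Its image $h(\gamma_n)$ is a model arc of "time length" $s_n\to0$. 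In the boundary case this is a horizontal segment of length $s_n$. In the interior case it is a spiral arc of length at most $C s_n|h(z_n)|$, and one first checks, via the growth of univalent functions or by working with $\log h$, that the relevant quantity in the model is still small.

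The core of the argument is to show that a very short model arc cannot have a preimage of Euclidean diameter $\ge\eps$ in $\D$, given that the whole forward orbit of the arc stays in $\Omega$. I would use extremal length for this. Consider the family $\Gamma_n$ of curves in $\D$ that separate $\gamma_n$ from a fixed compact set, for instance $|z|\le 1/2$, inside the annular region around $\gamma_n$. Because $\gamma_n$ has diameter $\ge\eps$ and hugs the circle, the modulus of the family of curves joining $\gamma_n$ to $\{|z|\le 1/2\}$ in $\D$ is bounded below by a constant depending only on $\eps$. By conformal invariance, the same lower bound holds for the curves in $\Omega$ joining $h(\gamma_n)$ to the fixed compact set $h(\{|z|\le1/2\})$. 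On the other hand, $h(\gamma_n)$ is a segment of length $s_n\to0$. Its invariance under forward translation means that near the segment, $\Omega$ contains the half-lines emanating from it. I would try to show that this forces the connecting modulus to go to $0$: every curve from the tiny segment to the fixed compact set must leave a small neighborhood of the segment, and the modulus of curves leaving a disk of radius $r\to0$ to reach a far set is $O(1/\log(1/r))$. That contradicts the lower bound. The point that needs care is that $h(z_n)$ escapes to $\infty$ or toward $\partial\Omega$, so the "fixed compact set" is far away. This only helps the modulus estimate, but it must be checked that the lower bound in $\D$ really is uniform.

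I expect this step to be the main obstacle. Specifically, the difficulty is obtaining the uniform lower bound on the modulus in $\D$ and the matching upper bound in $\Omega$ in the interior (spirallike) case, where the model arcs are not straight. If the extremal length bookkeeping proves awkward, the fallback is a more hands-on approach via the Berkson--Porta generator. In the interior case, after normalizing $\phi_t(0)=0$, the function $(z-\phi_t)/(z+\phi_t)$ has nonnegative real part by Schwarz's lemma. I would try to combine this with the semigroup identity $\phi_t=(\phi_{t/n})^{\circ n}$ to convert smallness on compacta into smallness near the circle.
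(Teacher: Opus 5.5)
Your argument is correct, but it takes a different route from the paper at the key step. The paper sets up the same contradiction: the arcs $\eta_n$ are $h$-preimages of the model arcs over $[0,t_n]$, with $\diam\eta_n\ge\delta$, so your first-hitting-time normalization is not needed. It then only observes that, after passing to a subsequence, the images $h(\eta_n)$ shrink to a single point of the Riemann sphere, and quotes the ``no Koebe arcs'' lemma (Lemma~\ref{koebearcs}) for the contradiction. You instead prove the needed special case of that lemma directly by extremal length, which is essentially how the no-Koebe-arcs lemma itself is proved (length--area).

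The two steps you flag both go through.
\begin{itemize}
\item \emph{Lower bound in $\D$.} Reflect across $\partial\D$ (the symmetry principle), which costs at most a factor $2$. Then use the planar lower bound for the modulus of curves joining two continua of diameters $\ge\eps$ and $1$ at distance at most $1/2$. You do not need $\gamma_n$ to ``hug the circle'' (which you did not justify). You also do not need the forward half-line invariance of $\Omega$.
\item \emph{Upper bound in $\Omega$, including the spirallike case.} Scale invariance of modulus suffices. Put $w_n=h(z_n)$. Then $h(\gamma_n)\subset D(w_n,s_n)$ in the boundary case and $h(\gamma_n)\subset D(w_n,Cs_n|w_n|)$ in the spirallike case. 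Since $|z_n|\to1$ pushes $w_n$ out of $h(\{|z|<3/4\})$, we get $\mathrm{dist}\bigl(w_n,h(\{|z|\le1/2\})\bigr)\ge\max(d_0,|w_n|-M)$ for fixed $d_0,M>0$. So the ratio $R_n/r_n\to\infty$ in both cases, and the annulus bound $2\pi/\log(R_n/r_n)\to0$.
\end{itemize}

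The paper's route buys brevity and uniformity. It only needs the image arcs to shrink to a point in the spherical metric, so both Denjoy--Wolff cases, and the possibility $w_n\to\infty$, are handled identically by a black-box lemma. Your route is self-contained and quantitative. Its cost is the explicit ratio estimate above, which is available here because the model flows are translations or similarities.
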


We thank the referee for giving us this reference.
Our proof, given in Section 4, and that in {\cite{Gum}}
are based on the fact that univalent functions have no Koebe arcs.

An easy consequence of this theorem is the following corollary:

\begin{cor} \label{polythm}
Let $X$ be a Banach space that contains $H^{\infty}$, and let $X_{\mathcal P}$ be the closure of the polynomials in $X$.  For all semigroups $\{\phi_t\}$,
$X_{\mathcal P} \subset [\phi_t,X]$.
\end{cor}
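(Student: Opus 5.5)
Since $[\phi_t,X]$ is by definition a closed subspace of $X$, it suffices to show that every polynomial lies in $[\phi_t,X]$; closedness then gives $X_{\mathcal P}\subset[\phi_t,X]$. Implicit in the statement is that $\{\phi_t\}$ acts on $X$, so each $C_t$ is bounded on $X$. I also take it, as is standard for the spaces in question, that $H^\infty$ embeds continuously in $X$, i.e. $\|f\|_X\le C\|f\|_{H^\infty}$ for $f\in H^\infty$. (If this were not assumed outright, I would get it from the closed graph theorem: both norms dominate pointwise evaluation on $\D$, so the inclusion $H^\infty\to X$ has closed graph.)

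Fix a polynomial $p$. Since $p\circ\phi_t-p\in H^\infty$,
$$\|p\circ\phi_t-p\|_X\le C\,\|p\circ\phi_t-p\|_{H^\infty}.$$
The function $p$ is Lipschitz on the closed disk, with constant $L=\max_{|w|\le1}|p'(w)|$, because the segment joining two points of the closed disk stays in the closed disk. Therefore
$$|p(\phi_t(z))-p(z)|\le L\,|\phi_t(z)-z|,\qquad z\in\D .$$
Taking the supremum over $z\in\D$ and applying Theorem \ref{uniformthm} gives $\|p\circ\phi_t-p\|_{H^\infty}\to0$ as $t\to0^+$. Hence $\|C_tp-p\|_X\to0$, and so $p\in[\phi_t,X]$.

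It remains to pass to the closure, which needs the usual fact that the set of $f\in X$ with $C_tf\to f$ is a closed subspace. This requires a uniform bound $\sup_{0\le t\le1}\|C_t\|_{X\to X}<\infty$. With that bound, for $f\in X_{\mathcal P}$ we choose a polynomial $p$ with $\|f-p\|_X<\eps$ and estimate
$$\|C_tf-f\|_X\le \|C_t\|\,\eps+\|C_tp-p\|_X+\eps,$$
so that $\limsup_{t\to0^+}\|C_tf-f\|_X\le(M+1)\eps$ for every $\eps>0$.

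I expect this uniform bound to be the only real obstacle. If the definition of $[\phi_t,X]$ already builds in that it is a closed subspace, the bound is immediate and nothing more is needed. Otherwise I would obtain it from the uniform boundedness principle applied on the space where strong continuity holds, as in the standard semigroup theory the paper relies on.
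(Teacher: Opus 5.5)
Your argument is essentially the paper's. The closed graph theorem gives $\|\cdot\|_X\lesssim\|\cdot\|_{\infty}$ on $H^\infty$, and Theorem \ref{uniformthm} then puts every polynomial in $[\phi_t,X]$; your Lipschitz bound for $p$ on $\overline{\D}$ does the job of the paper's estimate $\|\phi_t^n-z^n\|_\infty\le n\|\phi_t-z\|_\infty$ plus linearity. Closedness of $[\phi_t,X]$, which the paper simply takes from its definition, then finishes.

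You are right that this closedness really rests on a bound like $\sup_{0\le t\le1}\|C_t\|<\infty$. However, your fallback of getting that bound from the uniform boundedness principle on the set where strong continuity holds is circular. That set is not known to be complete, and its completeness is exactly what is at stake. In general the bound has to come from the space $X$ or be assumed, as in Theorem \ref{genfcn}.
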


We note that Corollary \ref{polythm} provides a unified proof of some of the known results mentioned above.  In particular,
the corollary gives an alternate proof of the cases that $X \in \{H^p, A^p, \mathcal B_0, \text{VMOA}\}$
in statement (i)
above, since polynomials are dense in these spaces, and also includes all of statement (iii).
It also establishes that the natural extension of
statement (iii) to $H^\infty$ is valid:

\begin{cor} \label{hinftythm}
For every  semigroup $\{\phi_t\}$, the disk algebra $A$ satisfies
$$
A \subset[\phi_t ,H^\infty].
$$
 \end{cor}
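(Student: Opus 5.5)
The plan is to derive Corollary \ref{hinftythm} from Corollary \ref{polythm}, applied with $X = H^\infty$. First, $[\phi_t,H^\infty]$ makes sense, since each $C_t$ acts on $H^\infty$ with $\|f\circ\phi_t\|_\infty \le \|f\|_\infty$; so $\{\phi_t\}$ acts on $H^\infty$. Trivially $H^\infty$ contains $H^\infty$, so Corollary \ref{polythm} gives $X_{\mathcal P}\subset[\phi_t,H^\infty]$. Here $X_{\mathcal P}$ is the closure of the polynomials in the $H^\infty$ norm, which is the supremum norm on $\D$.

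The remaining step is to identify this closure with the disk algebra $A$.
\begin{itemize}
\item Every $f\in A$ is uniformly continuous on $\overline{\D}$. Hence the dilates $f_r(z)=f(rz)$ satisfy $\|f_r-f\|_\infty\to0$ as $r\to1^-$.
\item Each $f_r$ is analytic on a disk of radius $1/r>1$. So its Taylor series converges uniformly on $\overline{\D}$, and the partial sums approximate $f_r$ in sup norm.
\item It follows that $A\subset X_{\mathcal P}$.
\item Conversely, a uniform limit of polynomials is continuous on $\overline{\D}$ and analytic in $\D$, so $X_{\mathcal P}\subset A$.
\end{itemize}
Thus $X_{\mathcal P}=A$, and Corollary \ref{hinftythm} follows.

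For completeness, one can also see directly why Corollary \ref{polythm} holds; this is where Theorem \ref{uniformthm} enters. For a polynomial $p$, the mean value inequality along segments gives
$$\|p\circ\phi_t-p\|_\infty\le \|p'\|_\infty\,\|\phi_t-z\|_{H^\infty}\to0.$$
Given $f\in A$ and $\eps>0$, choose a polynomial $p$ with $\|f-p\|_\infty<\eps$. Then
$$\|f\circ\phi_t-f\|_\infty\le \|(f-p)\circ\phi_t\|_\infty+\|p\circ\phi_t-p\|_\infty+\|p-f\|_\infty<2\eps+\|p\circ\phi_t-p\|_\infty,$$
and the last term tends to $0$ as $t\to0^+$.

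There is no real obstacle here. All the depth lies in Theorem \ref{uniformthm}; the only step needing any care is the density of polynomials in $A$, which is handled by the dilation argument above.
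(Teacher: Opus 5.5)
Your proof is correct and follows the paper's route: apply Corollary \ref{polythm} with $X = H^\infty$, then identify the sup-norm closure of the polynomials with $A$. Your supplementary approximation argument is a direct version of the paper's proof of Corollary \ref{polythm}. You use the mean value bound $\|p'\|_\infty\|\phi_t - z\|_\infty$ where the paper uses $\|\phi_t^n - z^n\|_\infty \le n\|\phi_t - z\|_\infty$ together with the closedness of $[\phi_t,X]$.
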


It is well known that a composition operator $C_\phi$ is bounded on the disk algebra $A$ if and only if
$\phi\in  A$.  Hence a semigroup $\{\phi_t\}$ acts on $A$ if and only if $\{\phi_t\}\subset A$.  Another
consequence of Theorem \ref{uniformthm} is the following:
 \begin{cor} \label{Athm} A semigroup $\{\phi_t\}$ satisfies
$\{ \phi_t \} \subset A$ if and only if $[\phi_t,A] = A$.
 \end{cor}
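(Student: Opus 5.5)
Both directions are short, and the heavy lifting is done by Theorem \ref{uniformthm}.

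\emph{Necessity.} Suppose $[\phi_t,A]=A$. By the definition of $[\phi_t,X]$, this presupposes that $\{\phi_t\}$ acts on $A$, so each $C_t$ is bounded on $A$. Taking $f(z)=z\in A$ gives $\phi_t=C_t(z)\in A$ for every $t\ge 0$, so $\{\phi_t\}\subset A$. (Equivalently, this is the quoted fact that $C_\phi$ is bounded on $A$ if and only if $\phi\in A$.)

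\emph{Sufficiency.} Suppose $\{\phi_t\}\subset A$.

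1. \emph{The semigroup acts on $A$.} For $f\in A$, $f$ extends continuously to $\overline{\D}$ and $\phi_t$ is continuous on $\overline{\D}$ with values in $\overline{\D}$, since $|\phi_t|\le1$. Hence $f\circ\phi_t$ is continuous on $\overline{\D}$ and analytic in $\D$, i.e. $f\circ\phi_t\in A$. Also $\|f\circ\phi_t\|_\infty\le\|f\|_\infty$, so each $C_t$ is a contraction on $A$.

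2. \emph{Strong continuity on polynomials.} For a polynomial $p$, write
$$p(\phi_t(z))-p(z)=(\phi_t(z)-z)\,q_t(z),$$
where $q_t$ is obtained from the difference quotient of $p$. Since $|\phi_t|,|z|<1$, we have $|q_t|\le \sup_{\overline{\D}}|p'|$. Therefore
$$\|p\circ\phi_t-p\|_\infty\le \|p'\|_\infty\,\|\phi_t(z)-z\|_{H^\infty}\to0$$
by Theorem \ref{uniformthm}.

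3. \emph{Extension to all of $A$.} Polynomials are dense in $A$, for instance via Fejér means or dilations $f_r(z)=f(rz)$ followed by Taylor truncation. Given $f\in A$ and $\eps>0$, choose a polynomial $p$ with $\|f-p\|_\infty<\eps$. Then
$$\|f\circ\phi_t-f\|_\infty\le\|(f-p)\circ\phi_t\|_\infty+\|p\circ\phi_t-p\|_\infty+\|p-f\|_\infty<2\eps+\|p\circ\phi_t-p\|_\infty.$$
The first term is at most $\eps$ by the contraction property from step 1, and the last term tends to $0$ by step 2. Hence $\limsup_{t\to0^+}\|f\circ\phi_t-f\|_\infty\le2\eps$, and since $\eps$ was arbitrary, $[\phi_t,A]=A$.

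This is essentially Corollary \ref{polythm} run inside $A$. There is no real obstacle once Theorem \ref{uniformthm} is available; the only points needing care are the uniform boundedness of $C_t$ (step 1) and the density of polynomials in $A$, both of which are standard.
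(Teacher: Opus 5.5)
Your proof is correct and follows the paper's argument. Necessity holds because acting on $A$ forces $\phi_t = C_t(z)\in A$. For sufficiency, Theorem \ref{uniformthm} gives strong continuity on polynomials, and density of polynomials in $A$ extends it to all of $A$. The only difference is that you make explicit the uniform bound $\|C_t\|\le 1$ on $A$ needed to pass from polynomials to their closure, whereas the paper absorbs this into the statement that $[\phi_t,X]$ is closed.
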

Corollary \ref{Athm} should be compared to \cite[Theorem 1.1]{CD}, where additional conditions are included with the statement that $\{\phi_t\}\subset A$.
Theorem \ref{uniformthm} allows removal of these additional conditions.

The rest of the paper is organized as follows:  Section \ref{background} contains background material.
 Theorem \ref{bmoathm}
is proved in Section \ref{bmoasec},  via another theorem involving estimates of the Bloch norm.
Theorem \ref{uniformthm} and our results
about $[\phi_t ,H^\infty]$ are in Section \ref{Asec}.  In addition to the theorems above, we show that for
some  semigroups we have $A=[\phi_t ,H^\infty]$, while   $A \subsetneqq [\phi_t ,H^\infty]$
for others.

\vskip .2in

{\it Notation for constants.}
For $X$ and $Y$ nonnegative quantities, the notation $X\lesssim Y$  or $Y\gtrsim X$
means $X\le CY$, where the exact value of the constant $C$ is not important. Similarly,  $X\approx Y$ means that
both $X\lesssim Y$ and $Y\lesssim X$ hold.

\section{Background} \label{background}

We begin with some basic definitions.

$H^{\infty}$ is the space of bounded functions in $H(\D)$, with norm
  $$\|f\|_{\infty} = \sup \{|f(z)| : z \in \D\}.$$

Let $A$ denote the disk algebra, i.e., the subspace of $H^{\infty}$ consisting of functions that extend to be continuous on $\overline{\D}$.

$\text{BMOA}$  is the Banach space of all analytic functions in the Hardy space $H^2$ whose boundary values have bounded mean oscillation. There are many characterizations of this space; we will use the one in terms of Carleson measures; see \cite{Gar}, \cite{Zh}. A function $f \in H^2$ belongs to $\text{BMOA}$ if and only if there exists a constant $C > 0$ such that
$$\int_{R(I)} |f'(z) |^2 (1 - |z|^2)\;dA (z) \leq C|I|,$$
for every arc $I \subset \partial \D$, where $R(I)$ is the Carleson rectangle determined by $I$.  That is, $$R(I) = \{re^{i \theta} \in \D: 1 - \frac{|I|}{2 \pi} < r < 1,  e^{i \theta} \in I \},$$
where $|I|$ denotes the length of $I$ and $dA(z)$ is the normalized Lebesgue measure on $\D$. The corresponding $\text{BMOA}$ norm is
$$\|f\|_* = |f(0)| + \sup_{I \subset \partial \D} \left(\frac{1}{|I|} \int_{R(I)} |f'(z)|^2 (1 - |z|^2) \; dA(z) \right)^{1/2}.$$
Trivially, each polynomial belongs to BMOA. The closure of all polynomials in $\text{BMOA}$ is denoted by $\text{VMOA}$.  Alternatively,
$$\text{VMOA} = \{ f \in \text{BMOA}: \lim_{|I| \rightarrow 0} \frac{1}{|I|} \int_{R(I)} |f'(z)|^2 (1 - |z|^2) \; dA(z) = 0 \}.$$

The Bloch space is
$$\B = \{ f \in H(\D) :  \|  f  \|_{\B}  = |f(0)| + \sup_{z \in \D} |f'(z)|(1-|z|^2) < \infty \}.$$
The closure of all polynomials in $\B$ is denoted by $\B_0$, the little Bloch space. It is usually defined as
$$\B_0 = \{ f \in H(\D) :  \lim_{|z| \rightarrow 1} |f'(z)|(1-|z|^2) = 0 \}.$$

It is well known that  $H^{\infty} \subset \text{BMOA} \subset \B$, and $\|f\|_{\infty} \gtrsim \|f \|_* \gtrsim \|f\|_{\B}$ for all $f \in H^{\infty}$; see \cite{Zh}.

Also if $h: \D \rightarrow \mathbb C$ is univalent, then $h \in \text{BMOA}$ if and only if $h \in \B$, and $h \in \text{VMOA}$ if and only if $h \in \B_0$; see, e.g., \cite[p. 283]{Gar}.

Next, we review some fundamental results about the structure of composition semigroups.  A reference for these is Section 3 of \cite{BP};
see also Section 3 of \cite{Sis}. Every nontrivial semigroup of analytic functions $\{\phi_t\}_{t\ge0}$ has a unique common fixed point $b$ with $|\phi_t'(b)| \leq 1$ for all $t \geq 0$, called the Denjoy-Wolff point of $\{\phi_t \}_{t \geq 0}$. Under a normalization, the Denjoy-Wolff point $b$ may be assumed to be 0 or 1.
If $b = 0$, then
\begin{align}\label{b=0}
\phi_t(z) = h^{-1}(e^{-ct} h(z)),
\end{align}
where $h$ is a univalent function from $\D$ onto a spirallike domain $\Omega$, $h(0) = 0$,  $\re c\ge0$,
and $we^{-ct} \in \Omega$ for each $w \in \Omega, t \geq 0$.
If $b = 1$, then
\begin{align}\label{b=1}
\phi_t(z) = h^{-1}( h(z) + ct ),
\end{align}
where $h$ is a univalent function from $\D$ onto a close-to-convex domain $ \Omega$, $h(0) = 0$, where  $\re c\ge0$,
and  $w + ct \in \Omega$ for each $w \in \Omega, t \geq 0$.

If $\{\phi_t\}_{t\ge0}$ is a semigroup, then each map $\phi_t$ is univalent. The infinitesimal generator of $\{\phi_t\}_{t\ge0}$ is the function
$$G(z) = \lim_{t \rightarrow 0^+} \frac{\phi_t(z) - z}{t}, \; z\in \D.$$
This convergence holds uniformly on compact subsets on $\D$ so $G \in H(\D)$. $G$ has a representation
$$G(z) = (\overline{b}z - 1)(z - b) P(z), \; z \in \D,$$
where $b$ is the Denjoy-Wolff point of $\{\phi_t\}_{t\ge0}$, $P \in H(\D)$ with $\re P(z) \geq 0$ for all $z \in \D$.

We will use the following result from  \cite{Bla2}:
\begin{thm}\label{genfcn} \cite[Theorem 1]{Bla2}
Let  $\{\phi_t\}_{t\ge0}$ be a semigroup with generator $G$ and $X$ a Banach space of analytic functions which contains the constant functions and such that $\sup_{0 \leq t \leq 1} \|C_t\| < \infty$.  Then
$$[\phi_t , X] = \overline{ \{f \in X: Gf' \in X \}}.$$
\end{thm}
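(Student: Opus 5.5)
The plan is to compare both sides with the infinitesimal generator of the restricted semigroup. Write $Y=[\phi_t,X]$ and $D=\{f\in X: Gf'\in X\}$. First I will check that $Y$ is a closed subspace of $X$ that is invariant under every $C_t$. Closedness comes from the uniform bound $M=\sup_{0\le t\le1}\|C_t\|<\infty$ together with a standard $\eps/3$ argument. Invariance holds because $\|C_s(f\circ\phi_t)-f\circ\phi_t\|\le \|C_t\|\,\|C_sf-f\|$.

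For the inclusion $Y\subseteq \overline{D}$, note that $\{C_t\}$ restricted to $Y$ is a $C_0$-semigroup, so its generator $\Gamma$ has domain dense in $Y$ by the Hille--Yosida theory. If $f\in\mathrm{dom}\,\Gamma$, then $(f\circ\phi_t-f)/t\to\Gamma f$ in $X$. I will use that point evaluations are bounded on $X$, which is the standing assumption for a Banach space of analytic functions. Then this limit also holds pointwise, and pointwise the limit equals $f'(z)\,\partial_t\phi_t(z)|_{t=0}=G(z)f'(z)$. Hence $Gf'=\Gamma f\in Y\subset X$, so $\mathrm{dom}\,\Gamma\subseteq D$ and $Y\subseteq\overline{D}$.

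For the reverse inclusion $\overline D\subseteq Y$, it suffices to show $D\subseteq Y$, since $Y$ is closed. Fix $f\in D$ and set $g=Gf'\in X$. The semigroup property gives $\partial_s\phi_s(z)=G(\phi_s(z))$, so pointwise
$$
f(\phi_t(z))-f(z)=\int_0^t G(\phi_s(z))f'(\phi_s(z))\,ds=\int_0^t (C_sg)(z)\,ds .
$$
The goal is the norm estimate $\|f\circ\phi_t-f\|_X\le Mt\|g\|_X$ for $0<t\le1$, which tends to $0$ and so gives $f\in Y$.

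This estimate is the main obstacle. The map $s\mapsto C_sg$ need not be strongly continuous, so a Bochner integral is not available. My plan is to approximate the integral by Riemann sums $R_n=\frac tn\sum_{k=1}^n C_{kt/n}g$. Each satisfies $\|R_n\|_X\le Mt\|g\|_X$. By (SG3) and continuity of $g$ on compacta, $R_n$ converges to $f\circ\phi_t-f$ locally uniformly on $\D$. I then need the norm of $X$ to be lower semicontinuous under locally uniform convergence, equivalently that the closed balls of $X$ are closed in the compact-open topology. This holds for all spaces used here ($H^\infty$, BMOA, $\B$, $A$ as a subspace of $H^\infty$), and I expect it is implicitly part of the hypotheses in \cite{Bla2}. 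Granting it, the bound passes to the limit and the proof is complete. If that property had to be avoided, the fallback is a weak formulation: pair against functionals in a norming set built from point evaluations and derivatives, which is exactly how the norms of these spaces are defined.
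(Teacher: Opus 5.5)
The paper gives no proof of this statement; it is quoted from \cite{Bla2}, so your argument has to stand on its own. The inclusion $[\phi_t,X]\subseteq\overline{\{f\in X: Gf'\in X\}}$ is handled correctly. $Y$ is closed and $C_t$-invariant, the generator $\Gamma$ of the restricted $C_0$-semigroup has dense domain, and bounded point evaluations give $\Gamma f=Gf'$.

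The gap is in the reverse inclusion, exactly at the step you flag. To pass from $\|R_n\|_X\le Mt\|Gf'\|_X$ to a bound on $\|f\circ\phi_t-f\|_X$ you need a Fatou-type property: if $F_n,F\in X$ and $F_n\to F$ locally uniformly, then $\|F\|_X\lesssim\liminf_n\|F_n\|_X$. This is needed only relative to $X$, since balls of $A$ or VMOA are not closed in $H(\D)$. Because bounded subsets of $X$ are normal families, this property is equivalent to the linear span of the point evaluations being a norming subspace of $X^*$. It is not among the hypotheses. A Banach space of analytic functions with bounded point evaluations that contains the constants need not have it.

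Your fallback is the same assumption in disguise. The map $z\mapsto\delta_z$ is norm-holomorphic into $X^*$, so $f\mapsto f^{(k)}(z)$ is a norm limit of difference quotients of point evaluations. Hence a norming set built from point evaluations and derivatives exists exactly when the point evaluations are already norming.

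Without such a device, nothing in your outline controls the $X$-norm of the pointwise integral $\int_0^t C_s(Gf')\,ds$, whose integrand is not norm-continuous. That estimate is the whole content of $\{f\in X: Gf'\in X\}\subseteq[\phi_t,X]$. So what you have actually proved is the theorem under the additional Fatou-type hypothesis, which should be stated explicitly rather than assumed to be implicit in \cite{Bla2}.

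That version suffices for $H^\infty$, $A$, BMOA and $\B$. Their norms are suprema of quantities that are lower semicontinuous under locally uniform convergence (by Fatou's lemma in the BMOA case). In particular it covers the only use of the theorem in this paper, $X=H^\infty$ in Proposition \ref{Anotbracketsp}, where $\|F\|_\infty\le\liminf_n\|R_n\|_\infty$ is immediate. For the statement in the generality quoted, a further idea is needed to bound $\|f\circ\phi_t-f\|_X$ from the listed hypotheses alone, and the proposal does not provide one.
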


We will now review some basic facts about prime ends introduced by Carath\'eodory in order to describe the boundary behavior of a univalent function $h$ from $\D$ onto a simply connected domain $\Omega \subset \mathbb{C}  \cup \{ \infty \}$; see Section 2.4 in \cite{Pom}.
A {\it{crosscut}} $C$ of $\Omega$ is an open Jordan arc in $\Omega$ such that $\overline{ C} \backslash C$ consists of one or two points on $\partial \Omega$.  Here $\overline{ C}$ denotes the closure of $C$ in the Riemann sphere.
If $C$ is a crosscut of $\Omega$, then $\Omega \backslash C$ has exactly two components.  The diameter of a set $E\subset \mathbb{C} \cup \{\infty\}$ in the spherical metric is denoted $\text{diam}^{\#}\;E$.


A {\it{null-chain}} $(C_n)_{n \ge 0}$ of $\Omega$ is defined as a sequence of crosscuts of $\Omega$ such that

\begin{enumerate}
\item[(a)] $\overline{C_n} \cap \overline{C_{n+1}} = \emptyset$ for all $n$;
\item[(b)] $C_n$ separates $C_0$ and $C_{n+1}$ for all $n$;
\item[(c)] $\text{diam}^{\#} \; C_n \rightarrow 0$ as $n \to \infty$.
\end{enumerate}

Let $V_n$ be the component of $\Omega\setminus C_n$ not containing $C_0$, and define $V'_n$ similarly for $(C'_n)$.  The null-chains $(C_n)$ and $(C'_n)$ are called {\it{equivalent}} if, for every sufficiently large $m$, there exists $n$ such that $V_n \subset V'_m$ and
$V'_n \subset V_m$.  This is an equivalence relation on the set of all null-chains of $\Omega$.  The equivalence classes are called the {\it{prime ends}} of $\Omega$.  A point $a \in \mathbb C \cup \{\infty\}$ is called a {\it{principal point}} of the prime end $P$ if there exists a null-chain $(C_n)$ representing $P$ such that $C_n \rightarrow \{a\}$ in the spherical metric as $n \rightarrow \infty$.  The set
$I(P) = \bigcap_n  \overline{V_n}$ is non-empty, compact and connected in $\mathbb C \cup \{\infty\}$. We call $I(P)$ the {\it{impression}} of $P$. If $I(P)$ is a single point we call the prime end {\it{degenerate}}.

We call a prime end $P$ {\it{accessible}} if there exists a Jordan arc that lies, except for one endpoint on $\partial \Omega$, in $\Omega$ and intersects all but finitely many crosscuts of every null-chain $(C_n)$ that represents P.

 We will also need the following result from univalent function theory which states that univalent functions have no Koebe arcs. For our purposes, it may be stated as follows:

\begin{lem} \label{koebearcs} \cite[Lemma 9.3 and Corollary 9.1]{Pom2}
Suppose that $h:\D \to \mathbb{C}$ is univalent, $\{\eta_n\}$ is a sequence of Jordan arcs in $\D$, and $h(\eta_n)$ converges to a point $w_0 \in \mathbb{C} \cup \{ \infty \}$, i.e.,
  $$h(z) \to w_0, \quad z \in \eta_n, \quad n \to \infty.$$
Then the Euclidean diameter of $\eta_n$ satisfies $\diam \eta_n \to 0$, as $n\to\infty$.
\end{lem}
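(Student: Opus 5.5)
We argue by contradiction, reducing to a statement about radial limits of univalent functions. Suppose the conclusion fails. Passing to a subsequence, we may assume $\diam \eta_n \ge \delta > 0$ for all $n$. By compactness of the hyperspace of continua in $\overline{\D}$ (Blaschke selection theorem), we may also assume that $\overline{\eta_n}$ converges in the Hausdorff metric to a continuum $E \subset \overline{\D}$ with $\diam E \ge \delta$. The plan has three steps: show that $E$ lies in $\partial \D$, turn this into radial approach to $w_0$ on a set of positive measure, and contradict the boundary behaviour of univalent functions.

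\emph{Step 1: $E\subset\partial\D$.} Suppose $E$ meets $\D$ at some point $z_0$. Choose $r<1$ with $|z_0|<r$ and $r$ small enough that $E$ is not contained in $\{|z|\le r\}$; this is possible since $\diam E \ge \delta$. By the boundary bumping theorem, the component $K$ of $E\cap\{|z|\le r\}$ containing $z_0$ meets $\{|z|=r\}$. Hence $K$ is a nondegenerate continuum in $\D$.

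Every point of $K$ is a limit of points $z_n\in\eta_n$, and on compact subsets of $\D$ the function $h$ is continuous. Together with $h(z_n)\to w_0$, this gives $h\equiv w_0$ on $K$. If $w_0=\infty$ this is already impossible. If $w_0$ is finite, it contradicts univalence, since $h$ would take the value $w_0$ at infinitely many points. So $E\subset\partial\D$, and $E$ is therefore a closed arc $J$ of positive length.

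\emph{Step 2: radial approach to $w_0$.} Fix a closed subarc $J'$ in the interior of $J$. For large $n$, the arc $\eta_n$ lies in a thin neighbourhood of $J$ and comes close to both endpoints of $J$. Its radial projection is connected, so it covers $J'$. Consequently, for every $e^{i\theta}\in J'$ and all large $n$, the radius to $e^{i\theta}$ meets $\eta_n$ at a point $r_n(\theta)e^{i\theta}$. These points satisfy $r_n(\theta)\to1$, because $\eta_n\to J\subset\partial\D$ uniformly, and $h(r_n(\theta)e^{i\theta})\to w_0$.

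\emph{Step 3: contradiction.} A univalent function on $\D$ has finite radial limits at almost every point of $\partial\D$. One way to see this is that $h\in H^p$ for $p<1/2$; alternatively, Beurling's theorem gives convergence except on a set of capacity zero. Where the radial limit exists it must equal $w_0$, since it is approached along $r_n(\theta)\to1$.

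If $w_0=\infty$, this is immediately a contradiction, because $J'$ has positive measure. If $w_0$ is finite, then $h-w_0\in H^p$ has radial limit $0$ on a set of positive measure. By the Riesz--Privalov uniqueness theorem this forces $h\equiv w_0$, contradicting univalence.

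The main obstacle is Step 1, specifically the topological bookkeeping needed to guarantee a nondegenerate continuum inside $\D$, together with the covering property in Step 2. Both rest only on standard continuum theory. The deep input is the existence of radial limits almost everywhere for univalent functions, which we quote.
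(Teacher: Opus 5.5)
Your argument is correct in substance, but it does not follow the route behind the lemma. The paper gives no proof; it quotes Pommerenke, where the statement comes from normal-function theory: a univalent function is normal, and a nonconstant normal function has no Koebe arcs, in the spirit of Bagemihl--Seidel. You take a different path. You use continuum theory to push the limit continuum of the $\eta_n$ onto an arc of $\partial\D$, and then use boundary-value theory specific to univalent functions. Membership $h\in H^p$ for $p<1/2$ gives finite radial limits a.e. Your arcs force these limits to equal $w_0$ on a set of positive measure. The contradiction then comes from the Riesz--Privalov uniqueness theorem, or from finiteness of the radial limits when $w_0=\infty$. This gives a short proof from standard Hardy-space facts, with no normal families or Lindel\"of-type arguments. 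What it gives up is generality. The normal-function argument also covers nonconstant normal functions such as Bloch functions, which may have radial limits almost nowhere; for those your Step 3 would have nothing to work with.

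Two small slips need repair.

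In Step 1, $\diam E\ge\delta$ does not guarantee an $r$ with $|z_0|<r$ and $E\not\subset\{|z|\le r\}$; for example, $E$ could be an arc of the circle $|z|=|z_0|$. In that case $E\subset\D$ and you can simply take $K=E$. More directly, $E\cap\D\subset h^{-1}(w_0)$ contains at most one point, and such a point would be isolated in the nondegenerate continuum $E$, which is impossible.

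In Step 2, if $J=\partial\D$ there are no endpoints to approach. First replace each $\eta_n$ by a subarc of a fixed small diameter $\delta'<\delta$; this is possible because the diameter of an initial subarc varies continuously with its endpoint. The limit continuum is then a short proper arc, and your covering argument applies.
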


\section{Proof of Theorem \ref{bmoathm}} \label{bmoasec}


\begin{thm} \label{bmoa}
Given any nontrivial semigroup $\{\phi_t\}$, there exists $f \in H^{\infty}$ such that
$$ \liminf_{t\to0}\| f \circ \phi_t - f \|_{\B} \geq 1.$$
\end{thm}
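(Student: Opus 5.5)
The plan is to build $f$ as a very thin interpolating Blaschke product and to compare the two terms of the Bloch seminorm at the zeros of $f$.

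Let $\rho$ denote the pseudo-hyperbolic distance. Choose a sequence $z_n\to\partial\D$ with $\rho(z_n,z_m)\to1$ rapidly, say $\prod_{m\neq n}\rho(z_n,z_m)\to 1$ as $n\to\infty$. Set $f=B$, the Blaschke product with zeros $z_n$, so $f\in H^\infty$ and $\|f\|_\infty\le1$. For such a sequence,
$$(1-|z_n|^2)|f'(z_n)|=\prod_{m\ne n}\rho(z_n,z_m)\to1 .$$
For the other term, write $w=\phi_t(z_n)$. Since $\phi_t$ is a self-map of $\D$, Schwarz--Pick gives
$$(1-|z_n|^2)\,|(f\circ\phi_t)'(z_n)| \le (1-|w|^2)|f'(w)| \le 1-|f(w)|^2 .$$
Hence
$$\|f\circ\phi_t-f\|_{\B}\ \ge\ \prod_{m\ne n}\rho(z_n,z_m)-\bigl(1-|f(\phi_t(z_n))|^2\bigr)$$
for every $n$. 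The theorem therefore reduces to the following: for each small $t>0$ there is an index $n=n(t)$, which can be taken arbitrarily large, such that $|B(\phi_t(z_n))|$ is close to $1$. That is, $\phi_t(z_n)$ must be pseudo-hyperbolically far from every zero $z_m$, with the closeness improving as $t\to0^+$.

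To arrange this, I would place the $z_n$ on a radius ending at a boundary point $\zeta$ that is not the Denjoy--Wolff point, at which the flow genuinely moves points. The key claim is that for each fixed $t>0$, $\rho(z,\phi_t(z))\to1$ as $z\to\zeta$ along the radius. Heuristically this holds because $\phi_t(z)-z\approx tG(z)$, while $1-|z|\to0$ and $G$ does not degenerate near a suitably chosen $\zeta$. To make it rigorous I would use the models \eqref{b=0} and \eqref{b=1}. The image $h(z)$ travels along the flow a fixed distance $|e^{-ct}-1||h(z)|$, respectively $|c|t$. I would then argue, via Lemma \ref{koebearcs} applied to $h^{-1}$ of the flow segments, that these segments cannot shrink to a point in $\D$ unless their images do. This forces $\phi_t(z)$ to stay a definite hyperbolic distance away from $z$, and in fact an unbounded one as $z\to\zeta$. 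Nontriviality of $\{\phi_t\}$ is exactly what makes $c\neq0$ available here.

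With the key claim in hand, I would choose the zeros inductively, also using (SG3). Given the small times $t$ we care about, I would pick $n$ large enough that $\rho(z_n,\phi_t(z_n))$ is close to $1$. I would also require $\phi_t(z_n)$ to lie far from the earlier zeros, since they sit near the compact part of $\D$ or are separated from $\phi_t(z_n)$ radially, and far from the later zeros, which are chosen much closer to $\zeta$ than $|\phi_t(z_n)|$. A diagonal choice is needed here: for $t$ ranging over $[2^{-k-1},2^{-k}]$, one block of consecutive zeros handles that range of times, using continuity in $t$ to pass from a finite net of times to the whole interval. The main obstacle is making the separation claim uniform in $t$ over each interval, because the displacement $\phi_t(z)-z$ shrinks as $t\to0$. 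I would handle this by letting the radius $1-|z_n|$ of the corresponding block shrink much faster than $2^{-k}$, so that the hyperbolic displacement still tends to infinity along the block. With this choice, $1-|B(\phi_t(z_{n(t)}))|^2\to0$ as $t\to0^+$, and the liminf is at least $1$.
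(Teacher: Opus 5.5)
Your inequality $\|f\circ\phi_t-f\|_{\B}\ge\prod_{m\ne n}\rho(z_n,z_m)-(1-|B(\phi_t(z_n))|^2)$ is correct. The gap is that the reduction it forces on you cannot be carried out with the zeros on a radius. In particular, your assertion that the earlier zeros ``sit near the compact part of $\D$ or are separated from $\phi_t(z_n)$ radially'' is false.

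Take the dilations $\phi_t(z)=e^{-t}z$. The Denjoy--Wolff point is $0$, so by symmetry you may take $\zeta=1$ and $0<z_1<z_2<\cdots$. Put $t_m=-\log z_m\to0$. Then $\phi_{t_m}(z_n)=z_mz_n$, and:
$\rho(z_mz_n,z_n)\le(1-z_m)/(1-z_n)$ for $n<m$; $\rho(z_m^2,z_m)<1/3$; and $\rho(z_mz_n,z_m)\le(1-z_n)/(1-z_m)$ for $n>m$.
Your thinness condition gives $\rho(z_m,z_{m+1})\to1$, which forces $(1-z_{m+1})/(1-z_m)\to0$. Since also $|B(w)|\le\rho(w,z_k)$ for every $k$, you get $|B(\phi_{t_m}(z_n))|\le1/2$ for all $n$ once $m$ is large. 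So at $t=t_m$ your lower bound is at most $1/4$, whatever $n(t_m)$ you choose. Hence $1-|B(\phi_t(z_{n(t)}))|^2\to0$ fails, even though $\|B\circ\phi_t-B\|_{\B}\ge1$ is actually true here.

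This is not special to dilations. In the non-automorphic case the images $\phi_t(r\zeta)$ converge, as $r\to1$, to interior points $p_t$ (such as $h^{-1}(e^{-ct}w_0)$). These tend to $\zeta$ as $t\to0^+$ and can lie on the very radius carrying the zeros. Whenever $p_t$ hits a zero $z_m$, the images of all late zeros accumulate at $z_m$.

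The loss comes from the Schwarz--Pick step $(1-|z|^2)|\phi_t'(z)|\le1-|\phi_t(z)|^2$, which discards exactly what the paper exploits. The paper's route is as follows:
\begin{enumerate}
\item Choose $\zeta=\gamma_0$ so that, for every small $t>0$, $\phi_t(r\gamma_0)$ converges as $r\to1$ to a point of $\overline{\D}\setminus\{\gamma_0\}$. The paper gets $\gamma_0$ from the accessible prime end at a point $w_0\in\partial\Omega$ of minimal modulus when $b=0$, and from $w_0=w-ct_0\in\partial\Omega$ when $b=1$; automorphisms are treated directly.
\item A bounded univalent $\phi_t$ lies in $\B_0$, and an automorphism has bounded derivative. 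Either way $(1-r^2)|\phi_t'(r\gamma_0)|\to0$.
\item $f'$ stays bounded near the limit point, because the zeros of $f$ accumulate only at $\gamma_0$.
\end{enumerate}
Hence the $f\circ\phi_t$ term vanishes as $r\to1$, and $\|f\circ\phi_t-f\|_{\B}\ge\limsup_{r\to1}(1-r^2)|f'(r\gamma_0)|$ for each $t$ separately. No diagonal construction or avoidance condition is needed, and your thin $B$ then gives the bound $1$ directly.

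Separately, your derivation of the key claim from Lemma~\ref{koebearcs} does not work as written. That lemma turns images shrinking to a point into preimage arcs shrinking, whereas your flow segments have fixed length. Also, a boundary point where ``$G$ does not degenerate'' must be produced, not assumed. Once you have a radial limit $w_0=\lim_{r\to1}h(r\zeta)$ with $e^{-ct}w_0\in\Omega$ (resp.\ $w_0+ct\in\Omega$), $\rho(r\zeta,\phi_t(r\zeta))\to1$ is immediate, since $\phi_t(r\zeta)$ then converges to a point of $\D$.
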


\begin{proof}

Let $\{\phi_t\}$ be a nontrivial semigroup, and
let $b$ be the corresponding Denjoy-Wolff point.
After normalization, we may assume that $b$ is either 0 or 1.  First we deal with the case that $b = 0$,
so that each $\phi_t$ is given by (\ref{b=0}).

When $\re c = 0$ in (\ref{b=0}), the $\{ \phi_t \}$ are rotations of the disk.
An infinite interpolating Blaschke product $f$ with all its zeros on the radius $[0,1)$ satisfies
  $$\limsup_{r \to 1^-} |f'(r)|(1-r) \geq \delta$$
for some $\delta > 0$; see \cite[Lemma 2.8]{AJS}.   However, as $f$ extends to be analytic in a neighborhood of every point $e^{i \theta} \in \partial \D$ except 1, we have
  $$\limsup_{r \to 1^-} |f'(re^{i\theta})| ( 1 - r ) = 0 \qquad (e^{i\theta} \neq 1).$$
If $\phi_t(z) = ze^{iat}$ for real $a \neq 0$, then, for all $t$ between 0 and $2\pi/|a|$,
\begin{align*}
 \| f \circ \phi_t - f \|_{\B} &\geq \sup_{0<r<1} \left[|f'( \phi_t( r))\phi'_t(r) - f'(r)|(1-r)\right] \\
  &\geq \delta.
\end{align*}
Replacing $f$ with $f/\delta$ gives the result.

Next consider the case that $\re c > 0$  in (\ref{b=0}), so that $\{ \phi_t \}$ does not consist of automorphisms.  Since $\Omega$ is spirallike about 0, we can choose $w_0 \in \partial \Omega$ such that
  $$|w_0| = \inf \{|w| : w \in \partial \Omega\}.$$
 Then $[0,w_0)\subset\Omega$.  For all sufficiently large values of $n$, let $C_n$ be the connected component of $\{w \in \Omega : |w - w_0| = 1/n\}$ that intersects $[0,w_0)$.  Then $(C_n)$ is a null-chain that represents an accessible prime end $P$ with principal point $w_0$. By \cite[Corollary 9.3]{Pom2} (see also \cite{Pom}), $\lim_{r \to 1^-} h(r\gamma_0)$ exists (and is equal to $w_0$), where $\gamma_0 \in \partial \D$ corresponds to $P$.
Thus,
  $$\lim_{r \to 1^-} \phi_t(r\gamma_0) = h^{-1}( e^{-ct}w_0) \in \D, \quad t >0 .$$
Since $\phi_t$ is univalent and bounded, $\phi_t$ is in the Dirichlet space, and
$\phi_t \in \mathcal B_0.$  Hence
  $$\lim_{r \to 1^-}  |\phi'_t(r\gamma_0)|(1-r) = 0.$$
Letting $f$ be an infinite interpolating Blaschke product with zeros confined to the radius
$\{ r\gamma_0 : 0 < r < 1 \}$,
we have
  $$\limsup_{r \to 1^-} |f'(r\gamma_0)|(1-r) \geq \delta$$
for some $\delta > 0$.  However, $f'$ is continuous on $\D$, so for fixed $t>0$
  $$\lim_{r \to 1^-} |f'(\phi_t(r\gamma_0))| =  |f'(h^{-1}(e^{-ct}w_0))| < \infty.$$
Thus, for all $t > 0$,
\begin{align}\label{deltabmoa}
\|f\circ\phi_t - f\|_{\B} &\geq \limsup_{r \to 1^-} |f'(\phi_t(r\gamma_0))\phi'_t(r\gamma_0) - f'(r\gamma_0)|(1-r)
 \geq \delta,
\end{align}
and replacing $f$ by $f/\delta$ gives $ \| f \circ \phi_t - f \|_{\B} \geq 1$.

It remains to consider the case that the Denjoy-Wolff point $b=1$, so that the $\phi_t$ are
given by $\phi_t(z) = h^{-1}(h(z)+ct)$ from (\ref{b=1}).
If the $\phi_t$ are automorphisms, then the map $w \mapsto w + ct$ is an automorphism of $\Omega$.  It follows that $\Omega$ is a half-plane or strip, and $\partial \Omega$ in $\Bbb{C}$ consists of impressions of degenerate prime ends which are not fixed under $w \mapsto w+ct$, $t > 0$. Let
$w_0 \in \mathbb{C}$ be one such impression, and let $\gamma_0$ be the corresponding point in $\partial \D$.
Then  $\phi_t(\gamma_0) \in \partial \D$ but $\phi_t(\gamma_0) \neq \gamma_0$ for all $t > 0$.  Let
$f$ be a bounded function (such as an interpolating Blaschke product with zero sequence along
$\{r \gamma_0 : 0 < r < 1\}$) such that
  $$\limsup_{r \to 1^-} |f'(r \gamma_0 )|(1-r) \geq \delta,$$
for some $\delta > 0$, but
  $$\lim_{r \to 1^-} |f'(r \gamma )|(1-r) = 0$$
for all $ \gamma \in \partial \D, \gamma \neq \gamma_0$.  We require that $f$ is well-behaved away from $\gamma_0$, i.e., $f'$ extends continuously to $\overline{\D} - \{\gamma_0\}.$
Now fix some $t>0$.
Since $\gamma_0$ is not a fixed point of $\phi_t$, composition with $\phi_t$ moves the radius $[0,\gamma_0)$ away to where $f$ is well-behaved.  For $\gamma_t = \phi_t(\gamma_0)$, our requirement ensures
that $f'$ extends to be continuous at $\gamma_t$, so
  $$\lim_{r \to 1^-} f'(\phi_t(r \gamma_0)) = f'( \gamma_t).$$
Since $\phi_t$ is an automorphism, $\phi_t'$ is bounded on $\D$.  For fixed $t > 0,$
\begin{align*}
\|f\circ\phi_t - f\|_{\B} &\geq \limsup_{r \to 1^-} |f'(\phi_t(r\gamma_0))\phi'_t(r\gamma_0) - f'(r\gamma_0)|(1-r)\\
&\geq \delta.
\end{align*}
As before, replacing $f$ by $f/\delta$ gives the result.

In the non-automorphism case, for $t>0$ the map given for $w \in \Omega$  by $w \mapsto w + ct$ is not onto.
Let $t>0$ and $w\in\Omega\setminus \left(\Omega + ct\right)$. Then there is $t_0\in(0,t]$
such that $w_0=w-ct_0\in\partial\Omega$, but $(w_0,w]\subset\Omega$.  As in the case $b=0$,   $w_0$
is the principal point of an accessible prime end,
and the same argument terminating with (\ref{deltabmoa}) completes the proof.

\end{proof}

\noindent{\bf \em Proof of Theorem \ref{bmoathm}:}
Each test function $f$ in Theorem \ref{bmoa} is in $H^{\infty}$, and hence in $X$ from the
hypothesis that $H^\infty\subseteq X$.  Since $X\subseteq \B$,  the
Closed Graph Theorem shows that $\|\cdot\|_\B\lesssim \|\cdot\|_X$
and bounding the Bloch norm away from 0 bounds the $X$ norm as well.
Thus it follows from Theorem \ref{bmoa}  that $f\notin [\phi_t, X]$, and so $[\phi_t, X ]\subsetneqq X$.
\hfill $\square$

\section{Proof of Theorem \ref{uniformthm} and Corollaries} \label{Asec}

\noindent{\bf \em Proof of Theorem \ref{uniformthm}:}
In the case where the Denjoy-Wolff  point $b$ of $\{\phi_t\}$ is on the boundary of $\D$, we may assume $b = 1$ and
  $$\phi_t(z) = h^{-1}(h(z) + ct), \quad z \in \D.$$
As before, $h$ is a univalent function from $\D$ to a close-to-convex domain $\Omega$ which has the property
 that $w+ct\in\Omega$ for each $w\in\Omega$ and all $t>0$, for some
 $c \in \mathbb{C}$ with $\re c \geq 0$.  If $c=0$, the result is trivial.  So assume  $c \neq 0.$

Suppose $\phi_t(z)$ does not converge uniformly to $z$ in $\D$.  Then there exist some $\delta > 0$ and infinite sequences $\{t_n\}, t_n \to 0^+$ as $n \to \infty$, and $\{z_n\} \subset \D$ such that
  $$\delta \leq |\phi_{t_n}(z_n) - z_n|, \quad n\ge 1.$$
  Letting $w_n = h(z_n) \in \Omega$,
\begin{align*}
|\phi_{t_n}(z_n) - z_n| &= |h^{-1}(h(z_n) + ct_n) - h^{-1}(h(z_n))|\\
&= |h^{-1}(w_n + ct_n) - h^{-1}(w_n)|.
\end{align*}
The points $w_n + ct_n$ and $w_n$ are endpoints of a line segment in $\Omega$ which pulls back to the Jordan arc
  $$\eta_n = \{h^{-1}(w_n+ct): 0 \leq t \leq t_n\} \subset \D.$$
Since $t_n \to 0$ and $\overline{\Omega}$ is compact in the Riemann sphere, we may pass to a subsequence of $\{w_n\}$ and assume the line segment $[w_n,w_n+t_n] = h(\eta_n) \to w_0 \in \overline{\Omega} \cup \{\infty\}$.  However, $\text{diam } \eta_n \geq \delta$, contradicting Lemma $\ref{koebearcs}$.  Therefore, $|\phi_t(z) - z| \to 0$ uniformly in $\D$ as $t \to 0^+$.

In the case where $\{\phi_t\}$ has Denjoy-Wolff point inside $\D$, minor modifications of the same argument yield uniform convergence.
We may assume
  $$\phi_t(z) = h^{-1}(h(z)e^{-ct}), \quad z \in \D, t \geq 0,$$
where $\re c\ge0$ and $\Omega = h(\D)$ is
 a spirallike domain. Supposing that uniform convergence fails, we have
  $$\delta \leq |\phi_{t_n}(z_n) - z_n| = |h^{-1}(w_ne^{-ct_n}) - h^{-1}(w_n)|.$$
Let $\eta_n = \{h^{-1}(w_ne^{-ct}): 0 \leq t \leq t_n\}$.
Passing to a subsequence as before, $h(\eta_n) \to w_0 \in \overline{\Omega} \cup \{\infty\}$
while $\text{diam } \eta_n \geq \delta$, a contradiction of Lemma \ref{koebearcs}. Thus, $\phi_t(z) \to z$ uniformly in $\D$.
\hfill $\square$

\vskip .2in

We can now prove Corollaries \ref{polythm}, \ref{hinftythm}, and  \ref{Athm}.
\vskip .1in
\noindent{\bf \em Proof of Corollary \ref{polythm}:}
The closed graph theorem and Theorem \ref{uniformthm} show that
$$\lim_{t\to0}\|\phi_t - z \|_X \lesssim \lim_{t\to0}\| \phi_t - z \|_{H^{\infty}} = 0,$$
i.e., the function $f(z) = z$ is in $[\phi_t, X]$.  To extend this to any function in $X_{\mathcal P}$ is straightforward.
The inequality
  $$\|\phi_t(z)^n - z^n\|_{\infty} \leq n \|\phi_t(z) - z\|_{\infty}$$
and linearity show any polynomial is in $[\phi_t, X]$, and since $[\phi_t, X]$ is closed,
$X_{\mathcal P} \subset [\phi_t, X]$. \hfill $\square$

\medskip

Corollary \ref{hinftythm} follows immediately from Corollary \ref{polythm} since $A$ is the closure of the polynomials in $H^{\infty}$.

\medskip

\noindent{\bf\em Proof of Corollary \ref{Athm}:}
If $A= [\phi_t, A]$, then the semigroup $\{\phi_t\}$ acts on $A$ and hence
$\{\phi_t\}\subset A$.  Next, for the converse, assume that $\{\phi_t\}\subset A$.
Since the norm on $A$ is the same as the $H^\infty$ norm, Theorem \ref{uniformthm} shows that
the function $f(z) = z$ is in $[\phi_t, A]$.  As in the previous proof, this extends to any function in $A$,
and so $A\subset [\phi_t, A]$. Since the reverse inclusion is trivial,
$A= [\phi_t, A]$ and
the proof of the corollary is complete.
\hfill $\square$

\medskip

In
Proposition \ref{Anotbracketsp} below, we will  show that  there are semigroups $\{\phi_t\}$
such that $A \subsetneqq [\phi_t,H^{\infty}]$.  In the other direction, it is known that
when $\{\phi_t\}$ consists of rotations or dilations of $\D$, then VMOA$\,=[\phi_t,\text{BMOA}]$;
see \cite{Bla}.  In the setting of $H^\infty$ we have the following analog:

\begin{prop}
If $\{\phi_t\}$ consists of rotations or dilations of $\D$, then $A = [\phi_t,H^{\infty}]$.
\end{prop}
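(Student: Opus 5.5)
By Corollary \ref{hinftythm} we already have $A \subset [\phi_t,H^\infty]$, so only the reverse inclusion $[\phi_t,H^\infty]\subset A$ needs proof. The two cases are rotations $\phi_t(z)=e^{iat}z$ with $a\in\mathbb R\setminus\{0\}$, and dilations $\phi_t(z)=e^{-ct}z$ with $c>0$. The plan is to fix $f\in H^\infty$ with $\|f\circ\phi_t-f\|_\infty\to0$ as $t\to0^+$ and show directly that $f$ extends continuously to $\overline{\D}$.

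\emph{Dilations.} Put $r=e^{-ct}$, so $f\circ\phi_t=f_r$ with $f_r(z)=f(rz)$, and $r\to1^-$ as $t\to0^+$. Each $f_r$ is analytic on a neighborhood of $\overline{\D}$, so $f_r\in A$. The hypothesis says $f_r\to f$ uniformly on $\D$. A uniform limit of functions in $A$ lies in $A$, because $A$ is closed in $H^\infty$. Hence $f\in A$.

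\emph{Rotations.} Here $f\circ\phi_t(z)=f(e^{iat}z)$, and since $a\neq 0$, $t\mapsto e^{iat}$ covers a neighbourhood of $1$ in $\partial\D$ for small $t$. The hypothesis therefore gives
$$\omega(\delta):=\sup_{|\theta|\le\delta}\ \sup_{z\in\D}|f(e^{i\theta}z)-f(z)|\to0 \quad (\delta\to0).$$
The natural step is to convolve: set $f_\eps(z)=\frac{1}{2\eps}\int_{-\eps}^{\eps} f(e^{i\theta}z)\,d\theta$. Then $\|f_\eps-f\|_\infty\le\omega(\eps)\to0$, so it suffices to show $f_\eps\in A$ and again use that $A$ is closed.

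Showing $f_\eps\in A$ is the main obstacle. Averaging in the angle smooths the Taylor coefficients: writing $f=\sum a_nz^n$ gives $f_\eps=\sum a_n\frac{\sin(n\eps)}{n\eps}z^n$. These coefficients are $O(|a_n|/n)$, which is not obviously summable. I would therefore average twice, i.e.\ convolve with the Fejér-type kernel $\frac{1}{\eps}\bigl(1-\frac{|\theta|}{\eps}\bigr)_+$, whose transform is $\bigl(\frac{\sin(n\eps/2)}{n\eps/2}\bigr)^2$. The coefficients then become $O(|a_n|/n^2)$, which is summable because $|a_n|\le\|f\|_\infty$. So the Taylor series converges absolutely and uniformly on $\overline{\D}$, giving $f_\eps\in A$. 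The estimate $\|f_\eps-f\|_\infty\le\omega(\eps)$ still holds, since the kernel is a probability density on $[-\eps,\eps]$, and this finishes the rotation case.

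An alternative for rotations is to argue radially. Uniform continuity in the angle, together with the Poisson representation of $f$ through its boundary function, shows that the boundary function is uniformly continuous; then $f$ is the Poisson integral of a continuous function and so lies in $A$. I expect the convolution argument to be cleaner, though.
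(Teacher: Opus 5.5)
Your proof is correct, and its skeleton is the paper's. The inclusion $A\subset[\phi_t,H^\infty]$ comes from Corollary \ref{hinftythm}, and your dilation argument ($f\circ\phi_t\in A$ for $t>0$, then closedness of $A$ in $H^\infty$) is exactly the paper's.

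The difference is in the rotation case. There the paper gives no argument and simply appeals to the proof of Theorem 3.5 in \cite{AJS}. You instead supply a self-contained approximate-identity argument, which makes the proposition independent of that reference at the cost of a few lines.

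Your Fej\'er-kernel computation is right, but the second averaging is unnecessary. The single average already has coefficients $a_n\frac{\sin n\eps}{n\eps}$, and by Cauchy--Schwarz, since $H^\infty\subset H^2$,
$$\sum_{n\ge1}\frac{|a_n|}{n}\le \|f\|_{H^2}\Bigl(\sum_{n\ge1} n^{-2}\Bigr)^{1/2}<\infty .$$
Alternatively, the boundary function of the single average is a sliding average of $f^*$, hence continuous.

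Two small points:
\begin{enumerate}
\item For $t\ge0$ the points $e^{iat}$ cover only a one-sided arc at $1$, not a neighbourhood. The other side follows from the identity $\sup_z|f(e^{-i\theta}z)-f(z)|=\sup_w|f(w)-f(e^{i\theta}w)|$ (substitute $w=e^{-i\theta}z$), so $\omega(\delta)\to0$ is still valid.
\item The paper's dilations are $e^{-ct}z$ with complex $c$, $\re c>0$, not only real $c>0$. Your argument covers this case verbatim, because $|e^{-ct}|<1$ for $t>0$, so $f(e^{-ct}z)$ is still analytic on a neighbourhood of $\overline{\D}$.
\end{enumerate}
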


\begin{proof}
In light of Corollary \ref{hinftythm} we need only show $[\phi_t,H^{\infty}] \subset A$ for such $\{\phi_t\}$.
If the semigroup consists of rotations, then $\phi_t(z) = e^{iat}z$ for some real $a \neq 0$.  If $f \in [\phi_t,H^{\infty}]$, then
   \begin{equation} \label{fdelta}
   \lim_{t \to 0^+} \|f(e^{iat} z) - f(z)\|_{\infty} = 0.
   \end{equation}
It follows from (\ref{fdelta}) that $f \in A$; see the proof of
Theorem 3.5 in \cite{AJS}.  Hence $[ \phi_t \, H^{\infty} ] \subseteq A$ as required.

If $\{\phi_t\}$ consists of dilations, they have the form $\phi_t(z) = e^{-ct}z$ for $\re c > 0$, and any $f \in H^{\infty}$ is uniformly continuous on $\phi_t(\D)$ for $t > 0$. I.e., $f \circ \phi_{t} \in A$.  Thus, $f \in [\phi_t,H^{\infty}]$
means $f$ is the uniform limit of the functions $f\circ\phi_t$, implying $f \in A$.
\end{proof}

Next, we turn to the demonstration that there is a semigroup $\{\phi_t\}$ with $A\subsetneqq[\phi_t,H^\infty]$.
First we need a proposition.

\begin{prop} \label{BprimeGbdd}
There exists an infinite Blaschke product $B$ such that
$$B'(z)(1-z) \in H^{\infty}.$$
\end{prop}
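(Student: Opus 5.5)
We will build $B$ with all of its zeros on the radius $[0,1)$, chosen so sparse that the factors barely interact. Put
$$B(z)=\prod_{n\ge1}\frac{a_n-z}{1-a_nz},\qquad a_n=1-2^{-2^n}$$
(any sufficiently rapidly increasing sequence $a_n\to1$ will do). Then $\sum(1-a_n)<\infty$, so $B$ is an infinite Blaschke product. The zeros are real and positive, and $B$ extends analytically across $\partial\D\setminus\{1\}$.

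We control the derivative through the logarithmic derivative:
$$B'(z)=B(z)\sum_n \frac{-(1-a_n^2)}{(1-a_nz)^2}.$$
We use two cruder facts. First, $|B(z)|\le|b_n(z)|$ for each $n$, where $b_n$ is the $n$-th factor. Second, $|B'(z)|\le\sum_n |b_n'(z)|$, with
$$|b_n'(z)|=\frac{1-a_n^2}{|1-a_nz|^2}.$$
So it suffices to bound
$$S(z)=|1-z|\sum_n\frac{1-a_n^2}{|1-a_nz|^2}$$
uniformly on $\D$.

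Next we estimate $|1-a_nz|$. For $z\in\D$ we have $|1-a_nz|\ge \max\bigl(1-a_n,\ \tfrac12|1-z|\bigr)$, up to absolute constants: write $1-a_nz=(1-z)+(1-a_n)z$ and use $\re(1-z)\ge0$ together with a geometric argument in the half-plane $\re w>0$. Hence each term satisfies
$$|1-z|\frac{1-a_n^2}{|1-a_nz|^2}\lesssim \min\Bigl(\frac{|1-z|}{1-a_n},\ \frac{1-a_n}{|1-z|}\Bigr).$$
With $\rho=|1-z|$ and $\eps_n=1-a_n$, the sum $\sum_n\min(\rho/\eps_n,\eps_n/\rho)$ is uniformly bounded in $\rho$ whenever $\eps_{n+1}/\eps_n\le q<1$, since it is dominated by two geometric series centred at the index where $\eps_n\approx\rho$. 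Our choice satisfies this with $q=\tfrac12$ or better. Therefore $S$ is bounded and $B'(z)(1-z)\in H^\infty$. The crude bound $|B'|\le\sum|b_n'|$ already suffices, so no cancellation from the $B$ factor is needed.

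The main obstacle is the elementary lower bound $|1-a_nz|\gtrsim |1-z|$ when $|1-z|\ge 1-a_n$, valid uniformly over the whole disk and not just along the radius. We would justify it as follows. Since $\re(1-z)>0$ and $(1-a_n)z$ has modulus at most $1-a_n$, we get
$$|1-a_nz|\ge |1-z|-(1-a_n)|z|.$$
If $|1-z|\ge 2(1-a_n)$, this gives $|1-a_nz|\ge\tfrac12|1-z|$. In the intermediate range $1-a_n\le|1-z|<2(1-a_n)$, the bound $|1-a_nz|\ge1-a_n$ gives comparability directly. With that in hand, the rest is bookkeeping with geometric series.
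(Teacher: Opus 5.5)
Your proof is correct. Its core is the same estimate as the paper's: with $\rho=|1-z|$ and $\eps_n=1-a_n$, the sum $\sum_n\min(\rho/\eps_n,\eps_n/\rho)$ that you split into two geometric series is exactly what the paper's bound $|\theta| f(\theta)\lesssim 1$ controls, with $\rho=|\theta|$.

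The difference is where the estimate is done. The paper works on the circle, using $|e^{i\theta}-a_k|^2=(1-a_k)^2+4a_k\sin^2(\theta/2)$. It then has to move the bound into the disk by applying the maximum principle to the finite products $B_n$ and letting $n\to\infty$. You instead prove $|1-a_nz|\ge\max(1-a_n,\tfrac12|1-z|)$ for every $z\in\D$, so the bound holds directly in the interior and no boundary or maximum-principle step is needed. This is a slightly shorter and more elementary route. For that inequality the plain triangle inequality applied to $1-a_nz=(1-z)+(1-a_n)z$ already gives $|1-a_nz|\ge\tfrac12|1-z|$ in both of your ranges; the appeal to $\re(1-z)>0$ is unnecessary.

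Two small corrections. First, your doubly exponential sequence is sparser than needed: the paper's $a_k=1-2^{-k}$ already satisfies $\eps_{n+1}/\eps_n=\tfrac12$, so your argument covers it. Second, your displayed formula for $B'$ is wrong. The logarithmic derivative is $B'/B=\sum_n b_n'/b_n$, not $\sum_n b_n'$. You never use that formula, but you should replace it with $B'=\sum_n b_n'\prod_{j\neq n}b_j$, or pass to the limit from the partial products $B_N$, whose derivatives converge locally uniformly. That identity, together with $|b_j|\le 1$, is what actually justifies $|B'|\le\sum_n|b_n'|$.
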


\begin{proof}
Let $B$ be the interpolating Blaschke product whose zeroes are $a_k = 1 - 2^{-k}$ for $k = 1,2,...$.
We write $B = \prod_1^{\infty} \sigma_k$, where
  $$\sigma_k(z) = \frac{a_k - z}{1 - a_k z} \qquad (z \in \overline{\D}).$$
Let
  $$f(\theta) = \sum_{k=1}^{\infty} \frac{1-a_k}{(1-a_k)^2 + \theta^2} \qquad (-\pi \leq \theta < \pi).$$
As in the proof of \cite[Lemma 1]{AhC} , $|e^{i\theta} - a_k|^2 = (1-a_k)^2 + 4a_k\sin^2(\theta/2).$
Hence, for $0 < |\theta| < \pi/4$,

\begin{align*}
|B'(e^{i\theta})| &\leq \sum_{k = 1}^{\infty} \left( |\sigma'_k(e^{i\theta})| \prod_{j \neq k} |\sigma_j(e^{i\theta})| \right)\\
&\leq \sum_{k=1}^{\infty} \frac{1-a_k^2}{|1-a_ke^{i\theta}|^2}
 \lesssim f(\theta).
\end{align*}

With $a_k = 1-2^{-k}$ and $\theta \neq 0$, we have

\begin{align}
f(\theta) &= \sum_k \frac{2^{-k}}{2^{-2k} + \theta^2}\\
&= \sum_{2^{-2k} \geq \theta^2} \frac{2^{-k}}{2^{-2k} + \theta^2} + \sum_{2^{-2k} < \theta^2} \frac{2^{-k}}{2^{-2k} + \theta^2} \label{sums1}\\
&\approx \sum_{2^{-k} \geq \theta} \frac{2^{-k}}{2^{-2k}} + \sum_{2^{-k} < \theta}\frac{2^{-k}}{\theta^2} \label{sums2}\\
&\leq \frac{2}{\theta} + \frac{2\theta}{\theta^2} = \frac{4}{\theta}.
\end{align}

The approximation from (\ref{sums1}) to (\ref{sums2}) invokes the inequality
  $$\frac{1}{x} \geq \frac{1}{x+y} \geq \frac{1}{2x},  \quad 0 < y \leq x.$$
The sums in (\ref{sums2}) are geometric, which we estimate by their largest terms.  Hence
 $$f(\theta) \lesssim \frac{1}{\theta}, \quad 0 < |\theta| \leq \pi/4.$$

Note that $B'$ extends to be continuous on $\overline{\D \setminus D_{\eps}}$, where $D_{\eps}$ is a small disk centered at 1. Also $|1-e^{i\theta}| \approx |\theta|$,  for $0 < |\theta| \leq \pi$. Hence there is $M > 0$
such that
  $$|(1-e^{i\theta})B'(e^{i\theta})| \lesssim |\theta| f(\theta) \leq M, \quad 0 < |\theta| \leq \pi.$$
Applying the same estimates to partial sums we see the finite Blaschke products $B_n = \prod_1^n\sigma_k$ satisfy
$|B'_n(e^{i\theta})(1-e^{i\theta})| \leq M$ for all $\theta$, and by the maximum principle
$|B'_n(z)(1-z)| \leq M$ for all $z \in \D$.  Also, $B_n'(z) \to B'(z)$ pointwise on $\D$.
Thus for $z \in \D$,
  $$|(1-z)B'(z)| = \left| \lim_{n\to \infty}(1-z)B_n'(z)\right|  \leq M.$$

Thus, $B'(z)(1-z) \in H^{\infty}$.
\end{proof}

\begin{prop} \label{Anotbracketsp}
There exist semigroups $\{\phi_t\}$ such that $A \neq [\phi_t,H^{\infty}].$
\end{prop}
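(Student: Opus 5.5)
We use Theorem \ref{genfcn} together with the Blaschke product $B$ of Proposition \ref{BprimeGbdd}. Take the semigroup of self-maps of $\D$ whose generator is
$$G(z) = (1-z)^2 \qquad (\text{Denjoy--Wolff point } b=1,\ P\equiv 1).$$
This corresponds to the representation (\ref{b=1}) with $h(z) = z/(1-z)$ (up to normalization), so that
$$\phi_t(z) = h^{-1}(h(z)+t),$$
the standard parabolic semigroup. Alternatively, to match Proposition \ref{BprimeGbdd} exactly, we could take $G(z)=(1-z)$. This is of the admissible form $(\overline{b}z-1)(z-b)P(z)$ with $b=1$ and $P(z) = 1/(1-z)$, which has $\re P \ge 0$ on $\D$. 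Any admissible $G$ for which $G B'$ is bounded suffices. The cleanest choice is $G(z)=1-z$, since then $GB' = (1-z)B' \in H^\infty$ directly by Proposition \ref{BprimeGbdd}. I will first verify that $P(z)=1/(1-z)$ has nonnegative real part. This holds because $\re \frac{1}{1-z} = \frac{1-\re z}{|1-z|^2} > 0$. Hence $G$ generates a semigroup by the Berkson--Porta representation recalled in Section \ref{background}.

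Next, I check the hypotheses of Theorem \ref{genfcn} with $X=H^\infty$. Constants lie in $H^\infty$, and $\|C_t\|\le 1$ on $H^\infty$ for all $t$. So
$$[\phi_t,H^\infty] = \overline{\{f\in H^\infty : Gf'\in H^\infty\}}.$$
By Proposition \ref{BprimeGbdd}, $B\in H^\infty$ and $GB' = (1-z)B'(z)\in H^\infty$. Hence $B\in[\phi_t,H^\infty]$.

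Finally, $B\notin A$. An infinite Blaschke product has zeros $a_k\to 1$ and $|B|\to 1$ a.e. on $\partial\D$. If $B$ extended continuously to $\overline{\D}$, continuity at $1$ would force $B(1)=\lim B(a_k)=0$. On the other hand, radial limits of modulus one at nearby boundary points give $|B(1)|=1$, a contradiction. Therefore $B\in[\phi_t,H^\infty]\setminus A$. Combined with Corollary \ref{hinftythm}, this gives $A\subsetneqq[\phi_t,H^\infty]$.

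The only delicate point is making sure the chosen $G$ is a legitimate generator, that is, that $\re P\ge0$. This is the computation above, and I expect no real obstacle there.
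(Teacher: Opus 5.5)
Your proof is correct and is essentially the paper's. You take the generator $G(z)=1-z$, which is the paper's $G_1$ and generates $\phi_t(z)=e^{-t}z+1-e^{-t}$; Proposition \ref{BprimeGbdd} then gives $GB'\in H^\infty$, Theorem \ref{genfcn} puts $B$ in $[\phi_t,H^\infty]$, and $B\notin A$. The paper additionally notes that $G_2(z)=-z(1-z)$ also works, which gives an example with an interior Denjoy--Wolff point.

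One small remark: Section \ref{background} only recalls the necessity direction of the Berkson--Porta representation. So to justify that $G$ is a generator, it is cleaner to exhibit $\phi_t(z)=e^{-t}z+1-e^{-t}$ directly: it is a convex combination of $z$ and $1$, so it maps $\D$ into $\D$, and the semigroup law is a one-line check.
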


\begin{proof}
The function $G_1(z) = (1-z)$ is the (infinitesimal) generator of a semigroup with $\phi_t(z) = e^{-t}z + 1-e^{-t}$ and Denjoy-Wolff point on $\partial \D$.  Also, $G_2(z) =  -z(1-z)$ is the generator of a semigroup with $\phi_t(z) = \frac{e^{-t}z}{(e^{-t}-1)z + 1}$ and inner Denjoy-Wolff point; see \cite{Sis}.
  Let $B$ be the Blaschke product from
 Proposition \ref{BprimeGbdd}.
 Then  Proposition \ref{BprimeGbdd} shows that $G_1B'$ and $G_2B'$ are in $H^\infty$.
Whether $\{\phi_t\}$ is generated by $G_1$ or $G_2$,  Theorem \ref{genfcn}
implies $B \in [\phi_t,H^{\infty}]$
 for the Blaschke product $B \notin A$.
\end{proof}

{\em Acknowledgment.} The authors want to thank the referee for carefully reading the paper and for the valuable
suggestions that improved it.


 \end{document}